\title{Congruence testing for odd subgroups of the modular group}
\author{Thomas Hamilton}
\address[Hamilton]{Premier Pensions Management \\ Corinthian House \\ 17 Lansdowne Road \\ Croydon CR0 2BX, UK }
\author{David Loeffler}
\address[Loeffler]{Mathematics Institute \\ University of Warwick \\ Coventry CV4 7AL, UK}
\email{d.a.loeffler@warwick.ac.uk}
\newtheorem{theorem}{Theorem}
\newtheorem{proposition}[theorem]{Proposition}
\newtheorem*{question}{Question}
\begin{document}

\begin{abstract}
 We give a computationally effective criterion for determining whether a finite-index subgroup of $\operatorname{SL}_2(\mathbf{Z})$ is a congruence subgroup, extending earlier work of Hsu for subgroups of $\operatorname{PSL}_2(\mathbf{Z})$.
\end{abstract}

\maketitle

Recall that a finite-index subgroup of $\operatorname{SL}_2(\mathbf{Z})$ is said to be a \emph{congruence subgroup} if it is defined by congruence conditions on the entries of its elements; formally, a subgroup is congruence if it contains the subgroup $\Gamma(N)$ of matrices congruent to the identity modulo $N$, and the least such $N$ is its \emph{level}.

We are interested in the following question:

\begin{question}
 Is there an efficient procedure that will determine whether a finite-index subgroup of $\operatorname{SL}_2(\mathbf{Z})$ is congruence?
\end{question}

One such algorithm follows from the following theorem, proved in \cite{ksv}, which is an extension of a classical theorem of Wolfahrt:

\begin{theorem}[Kiming--Sch\"utt--Verrill]\label{thm:ksv}
 Let $\Gamma \le \operatorname{SL}_2(\mathbf{Z})$ and let $d$ be the lowest common multiple of the widths of the cusps of $\Gamma$. If $\Gamma$ is congruence, then its level is either $d$ or $2d$.
\end{theorem}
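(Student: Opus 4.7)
The plan is to deduce the theorem from Wolfahrt's classical $\operatorname{PSL}_2$ result. Write $\tilde\Gamma = \Gamma \cdot \{\pm I\}$, so $\tilde\Gamma$ contains $-I$ and projects to the same image $\bar\Gamma$ in $\operatorname{PSL}_2(\mathbf{Z})$ as $\Gamma$. At each cusp $s$, write $\tilde h_s$ for the width of $s$ in $\tilde\Gamma$ (equivalently, in $\bar\Gamma$); a quick analysis shows $h_s \in \{\tilde h_s, 2\tilde h_s\}$, the doubling occurring precisely when only $-T_s^{\tilde h_s}$, not $T_s^{\tilde h_s}$, lies in $\Gamma$. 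Consequently $\tilde d := \operatorname{lcm}_s \tilde h_s$ satisfies $\tilde d \mid d \mid 2\tilde d$.

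Classical Wolfahrt applied to the congruence subgroup $\bar\Gamma$ yields $\tilde\Gamma \supseteq \Gamma(\tilde d)$. The obvious lower bound $d \mid N$, where $N$ is the level of $\Gamma$, follows from $\Gamma(N) \subseteq \Gamma$ forcing each $h_s$ to divide $N$. So the theorem reduces to showing $\Gamma \supseteq \Gamma(2\tilde d)$: combined with $2\tilde d \mid 2d$ this gives $N \mid 2d$, whence $N \in \{d, 2d\}$. If $-I \in \Gamma$ this is immediate, since then $\Gamma = \tilde\Gamma \supseteq \Gamma(\tilde d)$ and no width-doubling is possible, so $d = \tilde d$ and $N = d$.

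The substantive case is $-I \notin \Gamma$, where $\Gamma$ sits as the kernel in $\tilde\Gamma$ of some character $\chi : \tilde\Gamma \to \{\pm 1\}$ with $\chi(-I) = -1$. The main obstacle is to show $\chi$ vanishes on $\Gamma(2\tilde d) \subseteq \Gamma(\tilde d) \subseteq \tilde\Gamma$. The key structural observation is that $\Gamma(\tilde d)/\Gamma(2\tilde d)$, being the kernel of $\operatorname{SL}_2(\mathbf{Z}/2\tilde d) \to \operatorname{SL}_2(\mathbf{Z}/\tilde d)$, is an elementary abelian $2$-group of rank $3$. Since $\chi|_{\Gamma(\tilde d)}$ is a congruence quadratic character, and the abelianizations of odd-order congruence quotients of $\operatorname{SL}_2(\mathbf{Z})$ admit no order-$2$ quotients, one should be able to force $\chi|_{\Gamma(\tilde d)}$ to factor through reduction mod $2\tilde d$, completing the descent. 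Making this rigorous --- either via a careful analysis of the $2$-parts of congruence abelianizations, or via a sign-tracking extension of Wolfahrt's generation argument applied to generators of $\Gamma(2\tilde d)$ --- is where the technical work lies.
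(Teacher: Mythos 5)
The first thing to note is that the paper does not prove this statement at all: it is imported wholesale from \cite{ksv}, so there is no in-text argument to compare against and your attempt has to stand on its own as a reconstruction. Your skeleton is the right one, and matches the general shape of the Kiming--Sch\"utt--Verrill argument: pass to $\tilde\Gamma = \pm\Gamma$, apply classical Wolfahrt to get $\Gamma(\tilde d) \subseteq \tilde\Gamma$, record the easy divisibilities $d \mid N$ and $\tilde d \mid d \mid 2\tilde d$, dispose of the case $-I \in \Gamma$, and reduce everything to showing that the quadratic character $\chi$ cutting $\Gamma$ out of $\tilde\Gamma$ vanishes on $\Gamma(2\tilde d)$. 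All of that is correct.

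But that last step is the entire content of the theorem, and you have not proved it --- you explicitly defer it --- and the two observations you offer in its support are respectively false in general and insufficient. First, $\Gamma(\tilde d)/\Gamma(2\tilde d)$ is elementary abelian of rank $3$ only when $\tilde d$ is even; for $\tilde d$ odd it is $\ker\bigl(\operatorname{SL}_2(\mathbf{Z}/2\tilde d) \to \operatorname{SL}_2(\mathbf{Z}/\tilde d)\bigr) \cong \operatorname{SL}_2(\mathbf{Z}/2) \cong S_3$, which is neither abelian nor a $2$-group. Second, the remark that odd-order congruence quotients admit no order-$2$ quotients disposes only of the odd part of $N/\tilde d$; the factor of $2$ in the theorem arises entirely at the prime $2$, where the relevant kernels $\ker\bigl(\operatorname{SL}_2(\mathbf{Z}/2^{a+b}) \to \operatorname{SL}_2(\mathbf{Z}/2^{a})\bigr)$ are nontrivial $2$-groups with an abundance of quadratic characters, so no ``no order-$2$ quotients'' argument is available there. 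What actually has to be established is that every quadratic character of $\ker\bigl(\operatorname{SL}_2(\mathbf{Z}/2^{a+b}) \to \operatorname{SL}_2(\mathbf{Z}/2^{a})\bigr)$ (for $a \ge 1$), or at least every one arising by restriction from $\tilde\Gamma$, kills $\ker\bigl(\operatorname{SL}_2(\mathbf{Z}/2^{a+b}) \to \operatorname{SL}_2(\mathbf{Z}/2^{a+1})\bigr)$ --- equivalently, that the latter lies in the subgroup generated by squares and commutators of the former --- or else one must carry out your second suggested route, a sign-tracking refinement of Wolfahrt's generation-by-parabolics lemma, which is closer to what \cite{ksv} actually do. Either way this is a genuine computation and not a formality, so as written the proof is incomplete at its only nontrivial point.
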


(The case of level $2d$ can only occur if $\Gamma$ is \emph{odd}, i.e.~does not contain $-1$.)

In principle, one can now determine whether $\Gamma$ is congruence by calculating explicitly a list of generators for $\Gamma(N)$, where $N = d$ or $2d$ as appropriate, and testing whether each of these is contained in $\Gamma$. This approach is used in \emph{op.cit.} in order to give explicit examples of non-congruence lifts to $\operatorname{SL}_2(\mathbf{Z})$ of congruence subgroups of $\operatorname{PSL}_2(\mathbf{Z})$. However, the number of generators of $\Gamma(N)$ grows rather quickly with $N$, so this algorithm rapidly becomes impractical for large values of $N$.

We present the following alternative approach to the above problem. As has been noted by Hsu \cite{hsu} and others, a convenient data structure for representing a subgroup of $\operatorname{SL}_2(\mathbf{Z})$ of index $m$ is by the homomorphism $\operatorname{SL}_2(\mathbf{Z}) \to S_m$ given by left multiplication on the cosets $\operatorname{SL}_2(\mathbf{Z}) / \Gamma$. This, in turn, can be represented by two permutations giving the action of the generators $L = \begin{pmatrix} 1 & 0 \\ 1 & 1 \end{pmatrix}$ and $R = \begin{pmatrix} 1 & 1 \\ 0 & 1 \end{pmatrix}$ of $\operatorname{SL}_2(\mathbf{Z})$ on the cosets $\operatorname{SL}_2(\mathbf{Z}) / \Gamma$.

The computer algebra package Sage contains a library of routines for working with subgroups defined in this way, implemented by Vincent Delecroix and the second author based on an earlier implementation by Chris Kurth.

\begin{theorem}\label{main}
 Let $N = d$ if $-1 \in \Gamma$ and $N = 2d$ otherwise. Then there exists an explicit list of relations $\mathcal{L}_N$ in $L$ and $R$ (of length $\le 7$), such that $\Gamma$ is congruence if and only if the permutation representation of $\operatorname{SL}_2(\mathbf{Z})$ corresponding to $\Gamma$ satisfies the relations in $\mathcal{L}_N$.
\end{theorem}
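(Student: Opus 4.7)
The plan is to reduce the problem to finding an explicit finite presentation, with few relators, of the group $G_N := \operatorname{SL}_2(\mathbf{Z}/N\mathbf{Z})$ on the images of the generators $L$ and $R$. By Theorem~\ref{thm:ksv}, $\Gamma$ is congruence if and only if $\Gamma \supseteq \Gamma(N)$, and since $\Gamma(N)$ is normal in $\operatorname{SL}_2(\mathbf{Z})$ this containment is equivalent to the permutation representation $\rho\colon \operatorname{SL}_2(\mathbf{Z}) \to S_m$ factoring through $G_N$. Hence, once we exhibit a presentation of $G_N$ on the images of $L$ and $R$ using at most seven relators, the list $\mathcal{L}_N$ of those relators has the required property: $\rho$ satisfies $\mathcal{L}_N$ in $S_m$ if and only if $\rho$ factors through $G_N$, if and only if $\Gamma$ is congruence.

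When $-I \in \Gamma$, we have $N = d$ and $\rho(-I) = 1$, so $\rho$ automatically descends to $\operatorname{PSL}_2(\mathbf{Z}/d\mathbf{Z})$, and Hsu's work \cite{hsu} already furnishes the required presentation on the images of $L$ and $R$. The genuinely new case is $-I \notin \Gamma$: then $N = 2d$ is even, and the relations must detect the nontrivial action of $-I$, i.e.\ we must present the full group $\operatorname{SL}_2(\mathbf{Z}/N\mathbf{Z})$ rather than its projective quotient. The strategy is to decompose $G_N \cong \prod_p \operatorname{SL}_2(\mathbf{Z}/p^{v_p(N)}\mathbf{Z})$ via the Chinese Remainder Theorem, and then lift each of Hsu's $\operatorname{PSL}_2$ relators to a relator in $\operatorname{SL}_2$. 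The lifting uses the identity $(LR^{-1}L)^2 = -I$ in $\operatorname{SL}_2(\mathbf{Z})$: each Hsu relator $u_j(L,R)$ takes value $\pm I$ in $G_N$, and one corrects the sign, where necessary, by multiplying $u_j$ by $(LR^{-1}L)^2$ before setting it equal to the identity.

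The hard part will be verifying that this augmented list of relators actually presents $G_N$ itself, and not some proper quotient. The natural approach is an order comparison: if $\tilde G$ denotes the group abstractly defined by the relators, then $\tilde G$ surjects onto $G_N$ (since the relators hold there), and to upgrade this surjection to an isomorphism one can reduce $\tilde G$ modulo the central element $c := (LR^{-1}L)^2$ to recover Hsu's presentation of $\operatorname{PSL}_2(\mathbf{Z}/N\mathbf{Z})$, then check that $c$ itself has order exactly two in $\tilde G$. The final bookkeeping---ensuring that the total number of relators remains at most seven as one combines prime-power factors---should follow the same case analysis on $\gcd(N, 12)$ used by Hsu for the projective case.
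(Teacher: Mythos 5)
Your argument is essentially the paper's: the paper phrases the reduction via the \emph{normal core} of $\Gamma$ (which is exactly the kernel of the permutation representation, so that $\Gamma \supseteq \Gamma(N)$ iff $\phi$ kills a set of normal generators of $\Gamma(N)$, i.e.\ iff $\phi$ factors through $\operatorname{SL}_2(\mathbf{Z}/N\mathbf{Z})$), cites Hsu for those normal generators, and corrects the sign in Hsu's last relator by $(LR^{-1}L)^2 = -I$ precisely as you propose. One small slip of the pen: when $-I \in \Gamma$ the representation automatically descends to $\operatorname{PSL}_2(\mathbf{Z})$, not to $\operatorname{PSL}_2(\mathbf{Z}/d\mathbf{Z})$ --- factoring through the latter is the condition being tested.
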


This theorem has been proved for subgroups containing $-1$ by Hsu \cite{hsu}; our proof follows Hsu's closely, except that we use the Kiming--Sch\"utt--Verrill theorem (Theorem \ref{thm:ksv}) in place of the classical theorem of Wolfahrt.

\begin{proposition}
 Let $N \ge 1$. There is an explicit finite list $\mathcal{L}_N$ of words in $L$ and $R$ which \emph{normally generates} $\Gamma(N)$ (that is, $\Gamma(N)$ is the smallest normal subgroup of $\operatorname{SL}_2(\mathbf{Z})$ containing the elements in $\mathcal{L}_N$).
\end{proposition}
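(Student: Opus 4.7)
The plan is to reformulate the problem as finding an explicit finite presentation of $\operatorname{SL}_2(\mathbf{Z}/N) \cong \operatorname{SL}_2(\mathbf{Z})/\Gamma(N)$ with generators the images of $L$ and $R$. Indeed, by a standard argument from combinatorial group theory, if such a presentation has defining relators $w_1, \dots, w_k$ (read as words in $L, R$), then $\mathcal{L}_N = \{w_1, \dots, w_k\}$ normally generates $\Gamma(N)$ in $\operatorname{SL}_2(\mathbf{Z})$.

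I would construct such a presentation by lifting Hsu's explicit presentation of $\operatorname{PSL}_2(\mathbf{Z}/N)$ in \cite{hsu}, whose generators are already the images of $L$ and $R$. Each of Hsu's relators $\bar w_i$ lifts to a word $w_i$ in $L, R$ which, evaluated in $\operatorname{SL}_2(\mathbf{Z}/N)$, represents either $I$ or $-I$. In the latter case I replace $w_i$ by $w_i (LR^{-1}L)^{-2}$, using the identity $(LR^{-1}L)^2 = -I$ in $\operatorname{SL}_2(\mathbf{Z})$. For the small cases $N \in \{1, 2\}$, where $-I \in \Gamma(N)$, I additionally adjoin the word $(LR^{-1}L)^2$ to the list.

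It then remains to verify that the resulting list $\mathcal L_N$ normally generates all of $\Gamma(N)$, not merely a proper subgroup. One inclusion is clear by construction. For the reverse, let $N'$ denote the normal closure of $\mathcal L_N$ in $\operatorname{SL}_2(\mathbf{Z})$, so $N' \subseteq \Gamma(N)$. The image of $N'$ in $\operatorname{PSL}_2(\mathbf{Z})$ is the normal closure of $\{\bar w_i\}$, which equals $\bar\Gamma(N)$ by Hsu; hence $N' \cdot \{\pm I\} = \pm \Gamma(N)$, the full preimage of $\bar\Gamma(N)$. A short index count then forces $N' = \Gamma(N)$: for $N \ge 3$ this holds because $-I \notin \Gamma(N) \supseteq N'$, so $N'$ and $\Gamma(N)$ both have index $2$ in $\pm\Gamma(N)$; for $N \le 2$ it holds because $-I \in N'$ has been explicitly included, so $N' = \pm\Gamma(N) = \Gamma(N)$.

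The main obstacle is the bookkeeping in the lifting step: for each of Hsu's relators one must determine whether its natural lift evaluates to $I$ or $-I$ in $\operatorname{SL}_2(\mathbf{Z}/N)$, and the correction must be recorded in a way that depends effectively on $N$ (and in practice on its residue class). Once this case analysis is dispatched, the group-theoretic argument above completes the proof.
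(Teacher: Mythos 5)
Your proposal is correct and takes essentially the same route as the paper, whose proof simply cites Hsu's Lemmas 3.3--3.5 with the remark that the problem amounts to finding a presentation of $\operatorname{SL}_2(\mathbf{Z}/N\mathbf{Z})$ on the images of $L$ and $R$. Your extra step of lifting Hsu's $\operatorname{PSL}_2$ relators and correcting by $(LR^{-1}L)^2 = -I$ is exactly the device the paper uses implicitly in its explicit lists (e.g.\ replacing $(SR^5LR^{-1}L)^3 = 1$ by $(SR^5LR^{-1}L)^3 = (LR^{-1}L)^2$), and your index argument correctly justifies that one obtains $\Gamma(N)$ rather than $\pm\Gamma(N)$.
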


\begin{proof}
 See \cite[Lemmas 3.3-3.5]{hsu}. (This is essentially the same problem as determining a set of relations for a presentation of $\operatorname{SL}_2(\mathbf{Z}/ N\mathbf{Z})$ in terms of the images of $L$ and $R$.)
\end{proof}

\begin{proof}[Proof of Theorem \ref{main}]
 Let $N$ be as defined in the statement of the theorem. We know that $\Gamma$ is congruence if and only if it contains $\Gamma(N)$. Let $\Gamma'$ be the \emph{normal core} of $\Gamma$, i.e.~the intersection of the conjugates of $\Gamma$ in $\operatorname{SL}_2(\mathbf{Z})$; then, since the elements of $\mathcal{L}_N$ normally generate $\Gamma(N)$, it follows that $\Gamma$ is congruence if and only if $\mathcal{L}_N \subset \Gamma'$.

 However, $\Gamma'$ is precisely the kernel of the map $\phi: \operatorname{SL}_2(\mathbf{Z}) \to S_m$ giving the permutation representation of $\Gamma$. So $\Gamma$ is congruence if and only if $\phi$ is trivial on the elements of $\mathcal{L}_N$.
\end{proof}

(One could clearly adapt this argument to work with other explicit descriptions of $\Gamma$ as long as one has an algorithm for computing whether a given element of $\operatorname{SL}_2(\mathbf{Z})$ lies in the normal core of $\Gamma$.)

We now reproduce, for the reader's convenience, an explicit list of relations $\mathcal{L}_N$ as in Theorem \ref{main} (which are almost identical to those appearing in \cite[Theorem 3.1]{hsu}).

\begin{itemize}
 \item If $N$ is odd, one may take $\mathcal{L}_N$ to contain the single relation
 \[ (R^2 L^{-\tfrac{1}{2}})^3 = 1,\]
 where $\tfrac12$ is the multiplicative inverse of $2 \bmod N$. (This case can, of course, only occur if $-1 \in \Gamma$ and is thus identical to the first case of Hsu's theorem.)
 \item If $N$ is a power of 2, let $S = L^{20} R^{\tfrac15} L^{-4} R^{-1}$, where $\tfrac15$ is the multiplicative inverse of $5 \bmod N$. Then one may take $\mathcal{L}_N$ to consist of the three relations
 \begin{align*}
  (L R^{-1} L)^{-1} S (LR^{-1} L) &= S^{-1}, \\
  S^{-1} R S &= R^{25}, \\
  (SR^5 LR^{-1} L)^3 &= (LR^{-1} L)^2.
 \end{align*}
 (Note that if we assume that $-1 \in \Gamma$ we may replace the last relation with $(SR^5 LR^{-1} L)^3 = 1$, which is the relation appearing in Hsu's paper, but for odd subgroups we must use the slightly more complicated relation above.)
 \item If $N = em$ where $e$ is a power of 2, $m$ is odd and $e, m > 1$, then let $c, d$ be the unique integers mod $N$ such that $c = 0 \bmod e, c = 1 \bmod m$, $d = 1 \bmod e$, $d = 0 \bmod m$. Write $a = L^c, b = R^c, l = L^d, r = R^d$ and $s = l^{20} r^{\tfrac 1 5} l^{-4} r^{-1}$, where $\tfrac 1 5$ is interpreted mod $m$. Then we may take $\mathcal{L}_N$ to consist of the seven elements
 \begin{align*}
  [a, r] &= 1, \\
  (a b^{-1} a)^4 &=1, \\
  (ab^{-1}a)^2 &= (b^{-1} a)^3, \\
  (a b^{-1} a)^2 &= (b^2 a^{-\tfrac12})^3,\\
  (lr^{-1}l)^{-1} s (lr^{-1}l) &= s^{-1},\\
  s^{-1} r s &= r^{25},\\
  (lr^{-1} l)^2 &= (s r^5 l r^{-1} l)^3.
 \end{align*}
\end{itemize}

\subsection*{Acknowledgements} This paper is a much-condensed version of the first author's University of Warwick MMath dissertation, written in 2011-12 under the supervision of the second author. We are grateful to Vincent Delecroix for the original observation that Hsu's test should generalize to odd subgroups.

\providecommand{\bysame}{\leavevmode\hbox to3em{\hrulefill}\thinspace}
\providecommand{\MR}[1]{}
\renewcommand{\MR}[1]{%
 MR \href{http://www.ams.org/mathscinet-getitem?mr=#1}{#1}.
}
\providecommand{\href}[2]{#2}
\newcommand{\articlehref}[2]{\href{#1}{#2}}

\end{document}